  \def\cC{{\mathcal C}} 
  \def\cO{{\mathcal O}}
 \def\CC{{\mathbb C}}
 \def\NN{{\mathbb N}}
 \def\PP{{\mathbb P}}
\def\RR{{\mathbb R}}
\def\ZZ{{\mathbb Z}}
\def\bdelta{\bar{\delta}}
\def\ot{\otimes}
 \newtheorem{proposition}{Proposition}[section]
  \newtheorem{lemma}[proposition]{Lemma} 
  \newtheorem{theorem}[proposition]{Theorem} 
\theoremstyle{definition}
  \theoremstyle{remark}
\begin{document}

\title{Complex geometry of quantum cones}

\author{Tomasz Brzezi\'nski}
 \address{ Department of Mathematics, Swansea University, 
  Swansea SA2 8PP, U.K.} 
  \email{T.Brzezinski@swansea.ac.uk}   
  \keywords{Quantum cone, differential calculus, smooth algebra, holomorphic structure.}

\begin{abstract}
The algebras obtained as fixed points of the action of the cyclic group $\ZZ_N$ on the coordinate algebra of the quantum disc are studied. These can be understood as coordinate algebras of quantum or non-commutative cones. The following observations are made. First, contrary to the classical situation, the actions of $\ZZ_N$ are free and the resulting algebras are homologically smooth. Second, the quantum  cone algebras  admit differential calculi that have all the characteristics of calculi on {\em smooth} complex curves. Third, the corresponding volume forms are exact, indicating that the constructed algebras describe manifolds with boundaries.
\end{abstract}

\maketitle                   

\section{Introduction}
Recent studies of  deformed orbifolds or manifolds with singularities revealed  an intriguing phenomenon: many classical surfaces $\Sigma$ with singular points admit non-commutative or $q$-versions $\Sigma_q$ that are smooth. Smoothness can be understood on several levels. Most naively, one observes that the multiple roots of the defining polynomial relations become separated upon the quantization  \cite{BrzFai:tea}, \cite{Brz:Sei}. Furthermore, the classically non-free action of a finite group on a smooth surface that gives rise to the singularity becomes free once the surface and the action are quantized  \cite{Brz:smo}. On a more sophisticated level, the deformed coordinate algebras $\cO(\Sigma_q)$ are {\em homologically smooth}, i.e.\ they fit into a finite exact sequence of finitely generated and projective $\cO(\Sigma_q)$-bimodules \cite{Brz:smo}. Often, $\cO(\Sigma_q)$ is  a (twisted) {\em Calabi-Yau algebra} of dimension two, i.e\  $\cO(\Sigma_q)$ is not only homologically smooth, but also its Hochschild cohomology with values in the enveloping algebra of $\cO(\Sigma_q)$ is trivial except in degree two, where it is isomorphic to $\cO(\Sigma_q)$ with the bimodule structure twisted by an automorphism \cite{Liu:hom}. This last observation can be interpreted as an existence of the volume form, and is tantamount  to the Poincar\'e duality \cite{Van:rel}. 

Every two-dimensional orbifold is obtained by glueing cones, hence quantum cones seem to be a natural point to start. In this note we carry out parts of the programme of {\em differential smoothing} initiated in \cite{BrzSit:pil} and discuss differential structures on quantum cones that allow one to view them as complex curves.   We define coordinate algebras of quantum cones $\cO(C^N_{q,\gamma})$  as fixed points of the action of cyclic groups $\ZZ_N$ on the non-commutative disc algebra $\cO(D_{q,\gamma})$ \cite{KliLes:two}. This action turns out to be free and  $\cO(C^N_{q,\gamma})$  are (twisted) Calabi-Yau algebras. We derive differential calculi on $\cO(C^N_{q,\gamma})$ from that on $\cO(D_{q,\gamma})$ and show that they define holomorphic structures on $\cO(C^N_{q,\gamma})$ in the sense of \cite{KhaLan:hol}. The resulting volume forms on $\cO(C^N_{q,\gamma})$ are exact, which might indicate that quantum cones have boundaries.

\section{Quantum cones  are smooth}\label{sec.smooth}
The coordinate algebra $\cO(D_{q,\gamma})$  of the quantum disc is defined as the complex $*$-algebra generated by $z,z^*$ subject to relation
\begin{equation}\label{disc}
z^*z - qzz^* = \gamma,
\end{equation}
where $q,\gamma \in \RR$, $q\neq 0$ are parameters. This is obviously a deformation of the plane (or the unit disc), which corresponds to $q,\gamma =0$.  $\cO(D_{q,0})$ is the quantum or Manin's plane, and the case $\cO(D_{0,1})$ is the quantum oscillator algebra. In fact, if $\gamma\neq 0$, then the generators can be rescaled, so that $\gamma$ can be eliminated from \eqref{disc}. If $q\in (0,1)$,  the algebra   $\cO(D_{q,1-q})$ can be completed to the $C^*$-algebra isomorphic to the  Toeplitz  algebra \cite{KliLes:two}. For an accessible introduction to geometry of the quantum disc we refer to \cite[Chapter~1]{Vak:qua}.

$\cO(D_{q,\gamma})$  admits an action of the cyclic group $\ZZ_N$, which can be described in terms of the $\ZZ_N$-grading on generators as $\deg(z) =1$, $\deg(z^*) = N-1 = -1\  {\rm mod}\ N$. The fixed point subalgebra of this action or the degree-zero part of  $\cO(D_{q,\gamma})$ is generated by $a=zz^*$, $b=z^N$, $b^* = z^{*N}$. These satisfy the  relations
\begin{equation}\label{cone}
ab = q^N ba + \gamma [N]_q b, \qquad bb^* = \prod_{l=0}^{N-1} (q^{-l}a + \gamma [-l]_q), \qquad  b^*b = \prod_{l=1}^{N} (q^{l}a + \gamma [l]_q),
\end{equation}
where $[n]_q = (1-q^{n})/(1-q)$ are $q$-integers. The complex $*$-algebra generated by a selfadjoint  $a$ and $b,b^*$ subject to relations \eqref{cone} is called the {\em coordinate algebra of the quantum $N$-cone} and is denoted by $\cO(C^N_{q,\gamma})$. In case $q=\gamma =0$, $a$, $b$, $b^*$ can be thought of as coordinates and then relations \eqref{cone} describe a cone in $\CC\times \RR$, hence the name. The polynomials in $a$ on the right-hand sides of the last two equations in \eqref{cone} have no repeated roots as long as $\gamma\neq 0$. One might therefore expect that in this case the algebras  $\cO(C^N_{q,\gamma})$  describe {\em smooth} non-commutative surfaces. 

An action of a (finite) group $G$   on a quantum space $\Sigma_q$ is free if and only if the corresponding coordinate algebra $\cO(\Sigma_q)$ is {\em strongly $G$-graded}. This means that for all $g\in G$, one can find finite number of elements $X_i $ of $G$-degree $g$ and $Y_i$ of degree $g^{-1}$ such that $\sum_i X_iY_i =1$. In the case of the cyclic group $\ZZ_N$ suffices it to find such $X_i$ of degree $1$ and $Y_i$ of degree $N-1$; see \cite[Section~AI.3.2]{NasVan:gra}.

\begin{theorem}\label{thm.free}
For all $\gamma\neq 0$, the $\ZZ_N$-action on $D_{q,\gamma}$ is free. Consequently, $\cO(C^N_{q,\gamma})$ are homologically smooth algebras for all values of $N>1$, $q$ and $\gamma \neq 0$. 
\end{theorem}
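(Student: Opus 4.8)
The plan is to derive freeness of the $\ZZ_N$-action from strong $\ZZ_N$-gradedness of $\cO(D_{q,\gamma})$, and then to obtain homological smoothness of $\cO(C^N_{q,\gamma})$ by transporting it along that extension. By the criterion recalled just above the theorem, the first point reduces to writing $1\in\cO(D_{q,\gamma})$ as a finite sum $\sum_i X_iY_i$ with $\deg X_i=1$ and $\deg Y_i=N-1$. Two products of this type are at hand. One is simply $z\cdot z^*=zz^*=a$, of degree $1+(N-1)\equiv 0$. For the other, a short induction on $m$ using the reordering rules $az=z(qa+\gamma)$ and $z^*a=(qa+\gamma)z^*$ (both immediate rearrangements of \eqref{disc}) shows $(z^*)^mz^m=\prod_{l=1}^m(q^la+\gamma[l]_q)$ for every $m\ge 1$, the inductive step closing because $q^l+[l]_q=[l+1]_q$; the same identity is the last relation in \eqref{cone} with the factor $q^Na+\gamma[N]_q$ cancelled. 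Taking $m=N-1$ and noting $\deg\bigl((z^*)^{N-1}\bigr)=-(N-1)\equiv 1$ while $\deg\bigl(z^{N-1}\bigr)=N-1$, the element $R(a):=(z^*)^{N-1}z^{N-1}=\prod_{l=1}^{N-1}(q^la+\gamma[l]_q)$ is again a product of the required kind.

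The decisive point is that the two polynomials $a$ and $R(a)$ in the single variable $a$ are coprime exactly because $\gamma\ne 0$: the constant term of $R$ is $R(0)=\gamma^{N-1}\prod_{l=1}^{N-1}[l]_q$, which is non-zero (for $q>0$ this is immediate, each $[l]_q$ being a sum of positive terms). Hence $R(a)=R(0)+a\,S(a)$ for some $S\in\CC[a]$, and therefore
\[
1=\bigl(-R(0)^{-1}S(a)\,z\bigr)\,z^{*}+\bigl(R(0)^{-1}(z^{*})^{N-1}\bigr)\,z^{N-1},
\]
where, since $\deg$ is an algebra grading and $\CC[a]$ lies in degree $0$, the elements $-R(0)^{-1}S(a)z$ and $R(0)^{-1}(z^{*})^{N-1}$ have degree $1$. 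This is a decomposition of the desired form, so $\cO(D_{q,\gamma})$ is strongly $\ZZ_N$-graded and the action is free. For the homological smoothness of $\cO(C^N_{q,\gamma})=\cO(D_{q,\gamma})_0$ I would then invoke two facts: $\cO(D_{q,\gamma})$ is itself homologically smooth, being an iterated Ore extension of $\CC$ (concretely $\CC[z^{*}][z;\sigma,\delta]$ with $\sigma$ the rescaling $z^{*}\mapsto q^{-1}z^{*}$, an automorphism since $q\ne 0$); and the identity component of an algebra strongly graded by a finite group whose order is invertible in the base field inherits homological smoothness from the whole algebra (the transfer principle behind the word ``Consequently''; cf.\ \cite{Brz:smo}).

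The step I expect to be the main obstacle is the choice made in the first paragraph: one must combine the ``short'' relation $zz^{*}=a$, linear in $a$, with the ``long twisted'' relation $(z^{*})^{N-1}z^{N-1}=R(a)$ of degree $N-1$, and then recognise that $\gamma\ne 0$ is precisely what forces their $a$-contents to be coprime --- when $\gamma=0$ both polynomials vanish at $a=0$, which is the classical cone singularity, and no such decomposition of $1$ can exist. A minor further care point is the real parameter $q$: the argument needs $[l]_q\ne 0$ for $1\le l\le N-1$, which is automatic for $q>0$. The reordering induction and the transfer step are routine once this coprimality is established.
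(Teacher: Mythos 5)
Your proposal is correct and is essentially the paper's own argument: the paper also writes $1=\sum_i X_iY_i$ with $X_0=\alpha_0 z^{*N-1}$, $Y_0=z^{N-1}$, $X_i=\alpha_i a^{i-1}z$, $Y_i=z^*$, and solves for the $\alpha_i$ using $\beta_0=\gamma^{N-1}\prod_{l=1}^{N-1}[l]_q\neq 0$ --- your closed form $1=-R(0)^{-1}S(a)\,z\,z^*+R(0)^{-1}z^{*N-1}z^{N-1}$ is exactly that solution, and the smoothness transfer is by the same cited criterion. Your remark that one needs $[l]_q\neq 0$ for $1\le l\le N-1$ (automatic for $q>0$ but not for arbitrary real $q$) is a fair point of care that the paper glosses over.
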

\begin{proof}
Set $X_0 = \alpha_0 z^{*N-1}$, $X_i = \alpha_i a^{i-1} z$, $Y_0 =  z^{N-1}$ and $Y_i = z^*$, $i=1,\ldots, N-1$, where all the $\alpha_r\in \CC$ are to be determined. By equations \eqref{disc}, 
$$
z^{*N-1} z^{N-1} = \prod_{l=1}^{N-1} (q^{l}a + \gamma [l]_q)  =: \sum_{r=0}^{N-1} \beta_r a^r.
$$
 The comparison of the coefficients of the powers of $a$ in the condition $\sum_{i=0}^{N-1} X_iY_i =1$, yields the system of $N$ equations for the $\alpha_r$: $\alpha_0 \beta_0 = 1$ and $\alpha_0 \beta_i +\alpha _i =0$, for $i=1,2,\ldots, N-1$, whose
determinant is $\beta_0 = \gamma^{N-1} \prod_{l=1}^{N-1}[l]_q\neq 0$. Hence the required $X_i$ and $Y_i$ can be found.

Since the enveloping algebra of $\cO(D_{q,\gamma})$ is left Noetherian of finite global dimension and since the $\ZZ_N$-actions defining the cones are free if $\gamma\neq 0$, the algebras $\cO(C^N_{q,\gamma})$ are homologically smooth by \cite[Corollary~6]{Kra:Hoc} or \cite[Criterion~1]{Brz:smo}.
\end{proof}

The freeness of the action of $\ZZ_N$ on $D_{q,\gamma}$ is equivalent to the statement that the quantum disc is the total space of a $\ZZ_N$-principal quantum bundle over the quantum cone $C^N_{q,\gamma}$ \cite{BrzMaj:gau}, \cite{BrzHaj:Che}. This bundle is non-trivial, i.e.\ $\cO(D_{q,\gamma})$ is not isomorphic to $\cO(C^N_{q,\gamma})\ot \CC\ZZ_N$ as a $\ZZ_N$-graded left $\cO(C^N_{q,\gamma})$-module. The triviality would require the existence of  invertible elements in $\cO(D_{q,\gamma})$ of degree $n$, for  all $n\in \ZZ_N$. However, only non-zero scalar multiples of the identity are invertible in $\cO(D_{q,\gamma})$, and these have degree zero. 

\begin{theorem}\label{thm.C-Y}
For all $\gamma\neq 0$,  $\cO(C^N_{q,\gamma})$ are (twisted) Calabi-Yau algebras. 
\end{theorem}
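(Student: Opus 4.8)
The plan is to deduce Theorem~\ref{thm.C-Y} from the corresponding statement for the disc algebra $\cO(D_{q,\gamma})$, using the strong $\ZZ_N$-grading established in Theorem~\ref{thm.free}. First I would observe that $\cO(D_{q,\gamma})$ is itself twisted Calabi--Yau of dimension two: relation~\eqref{disc} exhibits it as the Ore extension $\CC[z^*][z;\sigma,\delta]$ of $\CC[z^*]$ with $\sigma(z^*)=q^{-1}z^*$ and $\delta(z^*)=-q^{-1}\gamma$, hence as an iterated Ore extension of $\CC$ of length two; since $\CC$ is Calabi--Yau of dimension zero and an Ore extension of a twisted Calabi--Yau algebra of dimension $d$ is twisted Calabi--Yau of dimension $d+1$ \cite{Liu:hom}, the claim follows. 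Its Nakayama automorphism $\nu$ may be taken diagonal on generators; the Ore-extension formula gives $\nu(z^*)=qz^*$, $\nu(z)=\lambda z$, and applying $\nu$ to~\eqref{disc} forces $q\lambda\gamma=\gamma$, so $\lambda=q^{-1}$ (this is the only role of $\gamma\neq0$ here). In particular $\nu$ is a $\ZZ_N$-graded automorphism fixing $zz^*$.

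For the descent, I would pass to a smash product. Since $\ZZ_N$ is abelian, the $\ZZ_N$-grading is the same datum as an action of the dual group $\widehat{\ZZ_N}\cong\ZZ_N$ whose invariants form exactly the degree-zero part $\cO(C^N_{q,\gamma})$. As $N$ is invertible in $\CC$ and $\nu$ is graded, hence commutes with the dual action, the smash product $\cO(D_{q,\gamma})\#\CC\widehat{\ZZ_N}$ is again twisted Calabi--Yau of dimension two, with Nakayama automorphism extending $\nu$. On the other hand, the grading being \emph{strong} (Theorem~\ref{thm.free}) is precisely the condition making $\cO(D_{q,\gamma})\#\CC\widehat{\ZZ_N}$ Morita equivalent to the fixed ring $\cO(C^N_{q,\gamma})$ \cite{NasVan:gra}. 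As the twisted Calabi--Yau property and its dimension are Morita invariant \cite{Van:rel}, the algebra $\cO(C^N_{q,\gamma})$ is twisted Calabi--Yau of dimension two. Tracking the Nakayama automorphism through the equivalence --- equivalently, just restricting the graded automorphism $\nu$ to the degree-zero part --- gives its Nakayama automorphism, acting on $a=zz^*$, $b=z^N$, $b^*=z^{*N}$ by $a\mapsto a$, $b\mapsto q^{-N}b$, $b^*\mapsto q^{N}b^*$ (up to the conventional swap of $b$ and $b^*$). Since this is non-inner for generic $q$, the $\cO(C^N_{q,\gamma})$ are genuinely \emph{twisted} Calabi--Yau, reducing to ordinary Calabi--Yau exactly when $q^N=1$ --- matching the parenthetical ``(twisted)'' in the statement.

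The main obstacle is the descent: fitting together cleanly the two cited facts --- that in characteristic zero a finite-group smash product over a twisted Calabi--Yau algebra, with the action commuting with the Nakayama automorphism, is twisted Calabi--Yau of the same dimension, and that a strongly graded algebra smashed with the dual group is Morita equivalent to its identity component --- while ensuring the homological dimension is preserved rather than shifted. An alternative, more hands-on organisation restricts a finite projective $\cO(D_{q,\gamma})^{e}$-resolution of $\cO(D_{q,\gamma})$ along the inclusion $\cO(C^N_{q,\gamma})^{e}\hookrightarrow\cO(D_{q,\gamma})^{e}$ --- legitimate because $\cO(D_{q,\gamma})$ is finitely generated projective over $\cO(C^N_{q,\gamma})$ on each side and $\cO(C^N_{q,\gamma})$ is a bimodule direct summand of $\cO(D_{q,\gamma})$ --- and then compares the resulting $\mathrm{Ext}$ groups. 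A routine side computation is to pin down the precise scalar in $\nu$, including any homological-determinant correction; only $\nu(a)=a$ (forced by $\gamma\neq0$) and $\nu(b)=q^{\pm N}b$ are actually needed. Finally, although~\eqref{cone} presents $\cO(C^N_{q,\gamma})$ directly as the degree-one generalized Weyl algebra $\CC[a](\tau,t)$ with $\tau(a)=q^{N}a+\gamma[N]_q$ and $t=\prod_{l=0}^{N-1}(q^{-l}a+\gamma[-l]_q)$, this presentation is less convenient, since the standard Calabi--Yau criteria for generalized Weyl algebras require $t$ coprime to its $\tau$-translates, which can fail (for instance at $q=-1$, $N\geq3$); the route through $\cO(D_{q,\gamma})$ sidesteps this.
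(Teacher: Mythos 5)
Your proposal takes a genuinely different route from the paper's, which is very short: the substitution $x=1+\gamma^{-1}(q-1)a$ turns \eqref{cone} into \eqref{cone.z}, exhibiting $\cO(C^N_{q,\gamma})$ as a generalized Weyl algebra over $\CC[x]$ whose defining polynomial has no repeated roots, and the twisted Calabi--Yau property is then quoted directly from \cite[Theorem~4.5]{Liu:hom}. You instead prove the property upstairs for $\cO(D_{q,\gamma})$ (as an iterated Ore extension) and descend along the strong $\ZZ_N$-grading via the smash product $\cO(D_{q,\gamma})\#\,\CC\widehat{\ZZ_N}$ and Morita invariance. That strategy is sound in outline --- the three inputs (Ore extensions preserve the twisted Calabi--Yau property; smash products by semisimple Hopf algebras preserve it; it is Morita invariant, and Dade's theorem identifies the smash product of a strongly graded algebra with a Morita partner of its degree-zero part) are all true, and your route has the merit of delivering the Nakayama automorphism of the cone almost for free; your computation $\nu(a)=a$, $\nu(b)=q^{\mp N}b$ is correct. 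The price is that none of these inputs is actually contained in the references you cite: \cite{Liu:hom} concerns generalized Weyl algebras, \cite{Van:rel} Poincar\'e duality, and \cite{NasVan:gra} gives the graded Morita theory but not the Calabi--Yau transfer. The ``main obstacle'' you flag is therefore real and would have to be discharged by importing the Liu--Wang--Wu theorem on Ore extensions and a Reyes--Rogalski-type theorem on smash products, which makes your argument considerably heavier than the one-step reduction in the paper.

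Two specific claims in your last paragraph need correction. First, the hypothesis of \cite[Theorem~4.5]{Liu:hom} that the paper uses is that the defining polynomial of the generalized Weyl algebra has no repeated roots; coprimality with its $\tau$-translates is the condition relevant for global dimension one, not for the twisted Calabi--Yau property. Second, and more importantly, your claim that the route through $\cO(D_{q,\gamma})$ ``sidesteps'' the degeneration at $q=-1$, $N\geq 3$ does not hold up: at those parameter values the roots $q^{-l}$ of the polynomial in \eqref{cone.z} do coalesce, but the determinant $\beta_0=\gamma^{N-1}\prod_{l=1}^{N-1}[l]_q$ in the proof of Theorem~\ref{thm.free} also vanishes (since $[2]_{-1}=0$), so the strong grading --- the very hypothesis on which your Morita step rests --- is not available there either; in the two-dimensional representation of $\cO(D_{-1,\gamma})$ with $z^2=z^{*2}=0$ one checks directly that the product of the degree-$1$ and degree-$(N-1)$ components is a proper ideal of the degree-zero part. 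Both arguments therefore cover the same range of real parameters (essentially $q\neq 0,\pm1$, plus $q=-1$ with $N=2$), and neither repairs the root-of-unity cases; within that range your proof is correct but substantially longer than the paper's.
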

\begin{proof}
Define $x:= 1 + \gamma^{-1}(q-1) a$. The relations \eqref{cone} yield
\begin{equation}\label{cone.z}
xb = q^Nbx, \qquad bb^* = \left(\frac{\gamma}{1-q}\right)^N\prod_{l=0}^{N-1} \left(1-q^{-l}x\right), \qquad  b^*b = \left(\frac{\gamma}{1-q}\right)^N\prod_{l=1}^{N} \left(1-q^{l}x\right).
\end{equation}
Since the change of variables $a\mapsto x$ is reversible, equations \eqref{cone.z} simply give a different presentation of $\cO(C^N_{q,\gamma})$. This presentation allows one to view $\cO(C^N_{q,\gamma})$ as a {\em generalized Weyl algebra} over the polynomial ring in the variable $x$ \cite{Bav:fin}. Since the polynomials on the right hand side of \eqref{cone.z} have no repeated roots, $\cO(C^N_{q,\gamma})$ are twisted Calabi-Yau algebras by \cite[Theorem~4.5]{Liu:hom}.
\end{proof}

The inspection of relations \eqref{cone.z} makes it clear that, for $\gamma\neq 0$ and $N$ odd, the algebras $\cO(C^N_{q,\gamma})$ are isomorphic to coordinate algebras of quantum real weighted projective spaces $\cO(\RR\PP^2_q(N;+))$ \cite{Brz:Sei}.

\section{Differential and complex geometry of quantum cones}\label{sec.complex}
By a {\em differential calculus} over an algebra $A$ we mean a pair $(\Omega (A), d)$, where $\Omega (A) = \oplus_{n\in \NN}  \Omega^n (A)$ is an $\NN$-graded algebra  such that $\Omega^0 (A) = A$, and  $d: \Omega^ n(A) \to \Omega^{n+1}(A)$  is the degree-one linear map that satisfies the graded Leibniz rule, is such that $d\circ d=0$ and, for all $n\in \NN$,  $\Omega^n(A)=Ad(A)d(A)\cdots d(A)$ ($d(A)$ appears $n$-times).  If $A$ is a complex $*$-algebra, then it is often requested that $\Omega (A)$ be a $*$-algebra and that $*\circ d = d\circ *$. In this situation one refers to $\Omega (A)$ as to a {\em $*$-differential calculus}. In this case, following \cite{KhaLan:hol}, a {\em complex structure on $A$}  is  the bi-grading decomposition of $\Omega (A)$,
$$
\Omega^n(A) = \bigoplus_{k+l =n} \Omega^{(k,l)}(A),
$$
such that $*:\Omega^{(k,l)}(A)\to \Omega^{(l,k)}(A)$, and the decomposition $d= \delta +\bdelta$ into differentials $\delta: \Omega^{(k,l)}(A)\to \Omega^{(k+1,l)}(A)$, $\bdelta: \Omega^{(k,l)}(A)\to \Omega^{(k,l+1)}(A)$, such that 
$\delta(a)^* = \bdelta(a^*)$, for all $a\in \Omega (A)$.

To construct a complex structure on $\cO(C^N_{q,\gamma})$ 
start with the  differential $*$-calculus $\Omega (D_{q,\gamma})$ on $\cO(D_{q,\gamma})$ generated by one-forms $dz$ and $dz^*$ subject to relations
\begin{equation}\label{calculus.d}
zdz = q^{-1}dz z, \qquad z^*dz = qdz z^*, \qquad dz\wedge dz^* = -q^{-1}dz^*\wedge dz, \qquad dz\wedge dz =0;
\end{equation}
see e.g.\ \cite{SinVak:ana}. Note that $\Omega (D_{q,\gamma}) = \cO(D_{q,\gamma}) \oplus \Omega^1 (D_{q,\gamma}) \oplus \Omega^2 (D_{q,\gamma})$, where $\Omega^1 (D_{q,\gamma})$ is a a free left and right  $\cO(D_{q,\gamma})$-module with free generators $dz$ and $dz^*$, and   $\Omega^2 (D_{q,\gamma})$ is a free  $\cO(D_{q,\gamma})$-module generated by $dz\wedge dz^*$. We define the differential $*$-calculus $\Omega (C^N_{q,\gamma})$ on $\cO(C^N_{q,\gamma})$ as the differential $*$-subalgebra of $\Omega (D_{q,\gamma})$  generated by $a=zz^*$, $b= z^N$ and their differentials $da$, $db$. Set
$$
\omega_0 := dz z^*, \qquad \omega_1 := d b = [N]_qz^{N-1} dz, \qquad \omega:= dz\wedge dz^*.
$$

\begin{theorem}\label{thm.complex}
Assume that $\gamma\neq 0$. Let $\Omega^{(1,0)} (C^N_{q,\gamma})$ be the submodule of $\Omega^{1} (C^N_{q,\gamma})$ generated by $\omega_0$, $\omega_1$, let $\Omega^{(0,1)} (C^N_{q,\gamma})$ be the submodule of $\Omega^{1} (C^N_{q,\gamma})$ generated by $\omega_0^*$, $\omega_1^*$, and let $\Omega^{(1,1)} (C^N_{q,\gamma}) = \Omega^{2} (C^N_{q,\gamma})$. Set $\Omega^{(k,l)} (C^N_{q,\gamma}) =0$ if $k>1$ or $l>1$. Let $\delta$ be the projection of $d$ onto $\Omega^{(1,0)} (C^N_{q,\gamma})$ and let $\bdelta$ be the projection of $d$ onto $\Omega^{(0,1)} (C^N_{q,\gamma})$. Then the above bi-grading decomposition of $\Omega (C^N_{q,\gamma})$ defines a complex structure on $\cO(C^N_{q,\gamma})$. Furthermore, $\Omega^{2} (C^N_{q,\gamma}) = \cO(C^N_{q,\gamma})\, \omega = \omega\, \cO(C^N_{q,\gamma})$.
\end{theorem}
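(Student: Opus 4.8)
The plan is to verify the defining axioms of a complex structure in the sense of \cite{KhaLan:hol} for the bi-grading given in the statement, and then to establish the rank-one freeness of the top piece $\Omega^2$. First I would check that the listed modules actually exhaust the one-forms, i.e.\ that $\Omega^{1}(C^N_{q,\gamma}) = \Omega^{(1,0)}(C^N_{q,\gamma}) \oplus \Omega^{(0,1)}(C^N_{q,\gamma})$. Since $\Omega(C^N_{q,\gamma})$ is generated as a differential $*$-algebra by $a$, $b$, $da$, $db$, and $da = \omega_0 z^* {}^{-1}\cdots$ — more carefully, $da = d(zz^*) = (dz)z^* + z\,dz^* = \omega_0 + z\,dz^*$, while $\omega_0^* = z\,dz^*$ up to a $q$-factor from \eqref{calculus.d} — one sees that $da$ lies in $\Omega^{(1,0)} + \Omega^{(0,1)}$, that $db = \omega_1 \in \Omega^{(1,0)}$, and that $db^* = \omega_1^* \in \Omega^{(0,1)}$. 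Bimodule closure of each summand must then be confirmed: one has to commute $\omega_0,\omega_1$ past $a$ and $b$ using \eqref{calculus.d} and show the result stays in the span of $\omega_0,\omega_1$ with coefficients in $\cO(C^N_{q,\gamma})$; for instance $b\,\omega_0 = z^N (dz) z^* = q^{-N}(dz) z^N z^* = q^{-N}\omega_0' $-type manipulations, keeping track that $z^{N-1}dz$ is a scalar multiple of $\omega_1$. The directness of the sum follows because in the free $\cO(D_{q,\gamma})$-module $\Omega^1(D_{q,\gamma})$ the elements $\omega_0,\omega_1$ involve only $dz$ and $\omega_0^*,\omega_1^*$ only $dz^*$.

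Next I would verify the three structural conditions: that $*$ interchanges $\Omega^{(k,l)}$ and $\Omega^{(l,k)}$ (immediate from the definition of $\Omega^{(0,1)}$ as the $*$-image of $\Omega^{(1,0)}$, and from $\Omega^{(1,1)}=\Omega^2$ being $*$-stable, which follows since $\omega^* = (dz\wedge dz^*)^*$ is again a scalar multiple of $\omega$ by \eqref{calculus.d}); that $d$ maps $\Omega^{(1,0)}$ and $\Omega^{(0,1)}$ into $\Omega^{(1,1)}$, which is automatic as $\Omega^{(2,0)}=\Omega^{(0,2)}=0$ and $\Omega^2$ is the whole top degree; and that with $\delta,\bdelta$ defined as the stated projections one has $d=\delta+\bdelta$ on all of $\Omega(C^N_{q,\gamma})$ together with the compatibility $\delta(a)^* = \bdelta(a^*)$. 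The decomposition $d=\delta+\bdelta$ on one-forms is clear; on $\cO(C^N_{q,\gamma})$ it requires that $d(\text{degree-}0)\subseteq \Omega^{(1,0)}\oplus\Omega^{(0,1)}$, already checked, and on $\Omega^1$ it is trivial since $d\Omega^1\subseteq\Omega^2=\Omega^{(1,1)}$ so both projections of the value make sense and sum to it. The identity $\delta(a)^*=\bdelta(a^*)$ reduces, by the Leibniz rule and multiplicativity of $*$, to checking it on the generators $a,b,b^*$, where it follows from $\omega_i^* $ being precisely the $(0,1)$-generators dual to $\omega_i$ and from $a^*=a$.

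For the final assertion $\Omega^2(C^N_{q,\gamma}) = \cO(C^N_{q,\gamma})\,\omega = \omega\,\cO(C^N_{q,\gamma})$, the inclusion of $\cO(C^N_{q,\gamma})\,\omega$ and $\omega\,\cO(C^N_{q,\gamma})$ into $\Omega^2$ is clear once one knows $\omega=dz\wedge dz^*\in\Omega^2(C^N_{q,\gamma})$; this in turn should come from expressing $\omega$ in terms of $da$, $da^*$ (equivalently $\omega_0$, $\omega_0^*$) — e.g.\ $\omega_0\wedge\omega_0^* = (dz\,z^*)\wedge(z\,dz^*)$, and pushing $z^*$ and $z$ through using \eqref{calculus.d} yields a nonzero scalar times $z^*z\,(dz\wedge dz^*)$, and since $z^*z = qa+\gamma$ is invertible in a localization but here one simply gets $\omega$ up to multiplication by the \emph{central-on-}$\Omega^2$ element $z^*z$; one must argue $z^*z$ acts invertibly on $\Omega^2(D_{q,\gamma})$ or, better, exhibit $\omega$ directly as a combination of wedges of the generators' differentials with $\cO(C^N_{q,\gamma})$ coefficients. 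Conversely, every element of $\Omega^2(C^N_{q,\gamma})$ is a finite sum of terms $f\,dg_1\wedge dg_2$ with $f,g_i\in\cO(C^N_{q,\gamma})$; writing each $dg_i$ in the basis $\{dz,dz^*\}$ and using \eqref{calculus.d} to reorder, everything collapses to $\cO(C^N_{q,\gamma})$-multiples of $dz\wedge dz^*$, and one checks the resulting coefficient lies in $\cO(C^N_{q,\gamma})$ (not merely in $\cO(D_{q,\gamma})$) by degree-counting in the $\ZZ_N$-grading, since $\omega=dz\wedge dz^*$ has degree $0$. The equality of the left and right $\cO(C^N_{q,\gamma})$-submodules generated by $\omega$ then follows by commuting $a$ and $b$ past $\omega$ using \eqref{calculus.d}: one finds $a\omega = \omega\,a\cdot(\text{scalar})$ and similarly for $b$, up to explicit powers of $q$, so the two cyclic modules coincide.

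\medskip

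\noindent\emph{Main obstacle.} I expect the delicate point to be the claim $\Omega^2(C^N_{q,\gamma})=\cO(C^N_{q,\gamma})\,\omega$: one must be sure that when a product like $db\wedge db^*$ is reduced to the form $f\,(dz\wedge dz^*)$, the coefficient $f$ genuinely lies in the subalgebra $\cO(C^N_{q,\gamma})$ and that $\omega$ itself is in the calculus — i.e.\ that the top-degree part of the differential $*$-subalgebra generated by $a,b,da,db$ is all of $\cO(C^N_{q,\gamma})\,dz\wedge dz^*$ and not a proper submodule. The $\ZZ_N$-grading argument (degree of $dz\wedge dz^*$ is $1+(N-1)=0$) is what makes this work, but it needs to be stated carefully, and one should double-check that $\omega$ arises with an \emph{invertible} scalar coefficient — not an element of $\cO(D_{q,\gamma})$ that fails to be invertible — when extracted from $\omega_0\wedge\omega_0^*$ or from $\omega_1\wedge\omega_1^*$; using $\omega_1\wedge\omega_1^* = [N]_q^2\, z^{N-1}(dz)\wedge z^{*N-1}(dz^*)$ and reordering gives a scalar multiple of $z^{N-1}z^{*N-1}\cdot\text{(central factor)}\cdot\omega$, and one must verify the total scalar is nonzero for $\gamma\neq0$, which ties back to $\beta_0\neq0$ from the proof of Theorem~\ref{thm.free}.
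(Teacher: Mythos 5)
There is a genuine gap, and it is precisely at the point the paper singles out as the only non-trivial one. Your argument takes for granted that $\omega_0=dz\,z^*$ and $\omega_0^*=z\,dz^*$ are elements of $\Omega^1(C^N_{q,\gamma})$: you write $da=\omega_0+z\,dz^*$ and conclude that $da$ lies in $\Omega^{(1,0)}+\Omega^{(0,1)}$, but that only shows the sum $\omega_0+\omega_0^*$ is in the calculus. The calculus $\Omega(C^N_{q,\gamma})$ is \emph{defined} as the differential $*$-subalgebra of $\Omega(D_{q,\gamma})$ generated by $a,b,da,db$, so a priori its degree-one part is the $\cO(C^N_{q,\gamma})$-module generated by $da$, $db$, $db^*$, and it is not at all clear that $da$ can be split into its two summands \emph{inside} this module. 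Until $\omega_0\in\Omega^1(C^N_{q,\gamma})$ is established, the submodules $\Omega^{(1,0)}$, $\Omega^{(0,1)}$ of the statement are not known to exist, the projections $\delta,\bdelta$ are not defined, and your exhaustion and directness arguments have nothing to apply to. The paper's proof is devoted almost entirely to this point: it computes $b^*db$ and $b\,db^*$ as the products $\prod_{l=1}^{N-1}(1+q^{\pm l}w)$ (with $w$ an affine function of $a$) times $\omega_0$, and then proves a Bezout-type lemma (Lemma~\ref{lem.calculus}, by induction on $n$) showing these two polynomials generate the unit ideal of $\CC[w]$, whence $\omega_0$ itself lies in $\Omega^1(C^N_{q,\gamma})$. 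Your proposal contains no substitute for this lemma.

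The same omission undermines your treatment of $\Omega^2$. You correctly sense the danger in your ``main obstacle'' paragraph, but your proposed fixes do not work: extracting $\omega$ from $\omega_1\wedge\omega_1^*$ or $\omega_0\wedge\omega_0^*$ produces coefficients such as $z^{N-1}z^{*N-1}$ or $z^*z$, which are \emph{not invertible} in $\cO(D_{q,\gamma})$ (only scalars are), so ``the total scalar is nonzero'' is not the relevant criterion and the reference to $\beta_0\neq 0$ does not rescue the argument. To conclude $\omega\in\Omega^2(C^N_{q,\gamma})$ from such products you would again need exactly the coprimality statement of Lemma~\ref{lem.calculus}. The paper instead gets this for free: once $\omega_0\in\Omega^1(C^N_{q,\gamma})$ is known, $\omega=-d\omega_0\in\Omega^2(C^N_{q,\gamma})$, and then your $\ZZ_N$-degree-counting argument (which is fine) gives $\Omega^2(C^N_{q,\gamma})=\cO(C^N_{q,\gamma})\,\omega=\omega\,\cO(C^N_{q,\gamma})$. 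The remaining verifications you list ($*$-compatibility, $d=\delta+\bdelta$, bimodule closure) are routine and consistent with the paper, but they are not where the content of the theorem lies.
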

\begin{proof}
The only non-trivial statement to be proven here is that both $\omega_0$ and $\omega_0^*$ belong to $\Omega^{1} (C^N_{q,\gamma})$ (there sum is in $\Omega^{1} (C^N_{q,\gamma})$, since it is equal to $da$). If this is established, then the bi-graded decomposition of $\Omega^{1} (C^N_{q,\gamma})$ and maps $\delta$, $\bdelta$ are well-defined. Also, since $\omega = -d\omega_0$, the last assertion will follow. We start by proving the following
\begin{lemma}\label{lem.calculus}
For all $q\in (0,1)$ and positive $n\in \NN$,  if an ideal in the polynomial ring $\CC[w]$ contains both $X_n(w)= \prod_{l=1}^{n-1} (1+q^{-l}w)$ and $ Y_n(w)= \prod_{l=1}^{n-1} (1+q^{l}w)$, then it also contains the identity. 
\end{lemma}
\begin{proof}
By induction in $n$. Clearly, the statement is true for $n=1$. Assume it is true for $k$, i.e.\ there exist  polynomials $f(w)$ and $g(w)$ such that $f(w)X_k(w) +g(w)Y_k(w) =1$. Let  $J$ be an ideal containing $X_{k+1}(w)$ and $Y_{k+1}(w)$. Then
\begin{eqnarray*}
(1+q^{-k}w)(1+q^k w) &=&  (f(w)X_k(w) +g(w)Y_k(w))(1+q^{-k}w)(1+q^{k}w) \\
&=&  f(w) X_{k+1}(w)(1+q^{k}w) + g(w)Y_{k+1}(w)(1+q^{-k}w) \in J.
\end{eqnarray*}
Similarly, by considering $ (f(qw)X_k(qw) +g(qw)Y_k(qw))(1+qw)(1+q^{-k+1}w)(1+q^{-k}w)$ one finds that $(1+qw)(1+q^{-k}w)(1+q^{-k+1}w)  \in J$, and hence also $(1+q^{-1}w)(1+q^{k}w)(1+q^{k-1}w)  \in J$, by symmetry. By comparing powers of $w$, one easily finds that there exist complex numbers $\alpha_0, \alpha_1, \alpha_2$ such that $(\alpha_0 +\alpha_1w)(1+q^{k}w) +\alpha_2(1+qw)(1+q^{-k+1}w) =1$. Multiplying this by $1+q^{-k}w$ we find that  $1+q^{-k}w\in J$. Again, by symmetry $1+q^{k}w\in J$, and clearly there exists a linear combination of these that makes up 1.
\end{proof}

Define $w:= {\gamma}^{-1}({1-q})qa - q $. Using relations \eqref{disc} and \eqref{calculus.d} one finds
\begin{equation}\label{omegas}
b^*db = q[N]_q \left(\frac{\gamma}{1-q}\right)^{N-1} \prod_{l=1}^{N-1} (1+q^{l}w)\, \omega_0, \quad bdb^*= [N]_q\left(\frac{ q^{-1}\gamma}{1-q}\right)^{N-1} \prod_{l=1}^{N-1} (1+q^{-l}w)\, \omega_0.
\end{equation}
Hence $\prod_{l=1}^{N-1} (1+q^{l}w)\omega_0,  \prod_{l=1}^{N-1} (1+q^{-l}w)\omega_0 \in \Omega^{1} (C^N_{q,\gamma})$ and, by Lemma~\ref{lem.calculus}, $\omega_0 \in \Omega^{1} (C^N_{q,\gamma})$. Since the calculus is closed under the $*$-operation also $\omega_0^* \in \Omega^{1} (C^N_{q,\gamma})$. Now the theorem follows.
\end{proof}

The existence of relations such as \eqref{omegas} means that the generators $\omega_0$ and $\omega_1$ of  the  left (or right) $\cO(C^N_{q,\gamma})$-module $\Omega^{(1,0)} (C^N_{q,\gamma})$ are not independent.  That $\Omega^{(1,0)} (C^N_{q,\gamma})$  is a finitely generated and projective $\cO(C^N_{q,\gamma})$-module may be argued as follows. $\Omega^{(1,0)} (C^N_{q,\gamma})$ is a submodule of the free $\cO(D_{q,\gamma})$-module  $\cO(D_{q,\gamma})\, dz$, generated by $z^{N-1}dz$ and $z^*dz$, hence it can be identified with the $\cO(C^N_{q,\gamma})$-submodule of $\cO(D_{q,\gamma})$ generated by $z^{N-1}$ and $z^*$. This is exactly the $\cO(C^N_{q,\gamma})$-module consisting of all elements in $\cO(D_{q,\gamma})$ of degree $N-1$. By Theorem~\ref{thm.free}, $\cO(D_{q,\gamma})$ is strongly graded, hence any submodule of elements of a fixed degree is  projective by \cite[A~I.3.3~Corollary]{NasVan:gra}.  Therefore  $\Omega^{(1,0)} (C^N_{q,\gamma})$ can be seen as module of sections of a non-commutative vector bundle over the quantum cone. Furthermore, since $\cO(D_{q,\gamma})$ has no invertible elements of degree $1$ or $N-1$, by the same arguments as those following the proof of Theorem~\ref{thm.free}, the module $\Omega^{(1,0)} (C^N_{q,\gamma})$ is  not free. This is in full agreement with the identification of  $\cO(C^2_{q,1-q})$ as the quantum equatorial sphere; holomorphic cotangent bundle over the sphere is not trivial. The employment of the $*$-conjugation yields similar statements for $\Omega^{(0,1)} (C^N_{q,\gamma})$.

Perhaps it is no too surprising that the quantum cones $C^N_{q,\gamma}$ admit natural complex structures, for the considerations of Section~\ref{sec.smooth} substantiate the claim that they are smooth quantum manifolds. The exactness of the volume form might indicate that quantum cones correspond to non-commutative manifolds with boundaries.  The non-triviality  of the cotangent bundle indicates that  $C^N_{q,\gamma}$ correspond to non-parallelizable manifolds. For these reasons one might think about the non-commutatrve smoothing of the cone as of blowing up the singular point into a balloon with a neck rather than as of rounding it.

\section{Outlook and speculations}
My motivation for writing this note was to draw the reader's attention to unexpected results of actions of finite groups on quantum spaces. The classically non-free actions are freed upon quantization, singularities are removed, and the geometric and topological natures of the spaces of fixed points are transformed. This opens up an exciting possibility of deploying the full power of differential geometry in areas in which it could not be utilized previously. Already the study of one of the simplest examples leads to surprises, poses new questions and opens up avenues for further research. An immediate direction of studies would be to try and understand quantum cones as Riemannian surfaces, whether from the spectral point of view \cite{Con:non} or in a more algebraic setup \cite{Maj:Rie}, \cite{BegMaj:sta}. Once these aspects are understood in sufficient depth, one can attempt to build physical models based on quantum cones. In another direction the {\em differential smoothness} of quantum cones in the sense of \cite{BrzSit:pil}, i.e.\ the existence of an isomorphism between complexes of differential and integral forms \cite{BrzElK:int}, should be investigated. The lessons thus learnt should be applied to other deformations of orbifolds including those that can be obtained by  glueing of quantum cones.

Hitherto we discussed only algebraic aspects of quantum cones, leaving aside topological issues.  In view of relations \eqref{cone.z}, the $C^*$-algebra completions of the $\cO(C^N_{q,\gamma})$ to algebras of continuous functions $\cC(C^N_{q,\gamma})$ can be carried out in the same way as for the quantum real weighted projective spaces \cite{Brz:Sei}. Consequently,  $\cC(C^N_{q,\gamma})$ is a pullback of $N$-copies of the Toeplitz algebra along the symbol maps or the algebra of continuous functions on the $N$-copies of  quantum discs glued along their circular boundaries; see \cite{CalMat:cov}. In particular  $\cC(C^2_{q,\gamma})$ is isomorphic to the algebra of continuous functions on the quantum equatorial or generic Podle\'s sphere  \cite{Pod:sph}. This identifies $C^2_{q,\gamma}$ as a deformation of a compact manifold without a boundary. Classically, volume forms for such manifolds are never exact, yet the volume forms described in Theorem~\ref{thm.complex} are exact! The exactness of the volume form is compatible with the interpretation of $C^2_{q,\gamma}$ as the cone but not as the sphere. On the other hand, the interpretation of $C^2_{q,\gamma}$ as a sphere is compatible with the non-triviality of cotangent bundles. 

The identification of $C^2_{q,\gamma}$ leads to another question: how is it possible for the sphere to arise from an action of a finite group on the disc? A possible heuristic answer might be to  think about the quantum disc as having an internal structure: the quantum disc is composed of layers of quantum discs. The $\ZZ_2$-action does not bulge  the disc into a cone, but rather it gently separates two internal layers, which are still joint at the common circle, thus forming a sphere or a balloon. Speculatively, one might interpret the classical circle that the equatorial Podle\'s sphere contains as the rim of the neck of the balloon (rather than as the equator), and thus reconcile this point of view with the interpretation of the differential smoothing offered at the end of Section~\ref{sec.complex}. For the  action of  $\ZZ_N$, $N$ internal layers are separated.

\section*{Acknowledgements.}
It is my pleasure to thank the organizers of the workshop on Noncommutative Field Theory and Gravity, Corfu 2013 for creating a friendly and inspiring research environment.

\end{document}